\newtheorem{theorem}{Theorem}[]
\newtheorem{corollary}{Corollary}[]
\newtheorem{definition}{Definition}[]
\newtheorem{lem}{Lemma}[section]
\newcommand{\mbE}{\mathbb{E}}
\newcommand{\rbk}[1]{\left(#1\right)}
\newcommand{\sbk}[1]{\left[#1\right]}
\newcommand{\cbk}[1]{\left\{#1\right\}}
\newcommand{\mc}{\mathcal}
\newcommand{\tPinull}{\tilde{\pi}_{0}\left(\lambda\right)}
\newcommand{\ThreshRej}{t_{\alpha }\left( \widetilde{FDR}_{\lambda }\right)}
\newcommand{\RejT}{\tilde{t}_{\alpha }\left(\lambda\right)}
\newcommand{\Gfdr}{\widetilde{FDR}_{\lambda }}
\begin{document}

\title{\vspace{-1cm}Stopping time property of thresholds of Storey-type FDR procedures}
\author{Xiongzhi Chen\footnote{Corresponding author. Center for Statistics and Machine Learning, and Lewis-Sigler Institute for
Integrative Genomics, Princeton University, Princeton, NJ 08544. Email:
\texttt{xiongzhi@princeton.edu}.}
\ and Rebecca W. Doerge\footnote{Departments of Statistics and Agronomy,
Purdue University, West Lafayette, IN 49707. Email: \texttt{doerge@purdue.edu.}}
}
\date{}
\maketitle

\begin{abstract}
For multiple testing, we introduce Storey-type FDR procedures and
the concept of ``regular estimator of the proportion of true nulls''. We show that the rejection threshold of a Storey-type FDR procedure is a stopping time with respect to the backward filtration generated by the p-values and that a Storey-type FDR estimator at this rejection threshold equals the pre-specified FDR level, when the estimator of the
proportion of true nulls is regular. These results hold regardless
of the dependence among or the types of distributions of the p-values. They directly imply that
a Storey-type FDR procedure is conservative when the null p-values are independent and uniformly distributed.
\medskip
\newline
\textit{Keywords}: False discovery rate, rejection threshold, stopping time property, Storey-type FDR procedure. \medskip\newline
\textit{MSC 2010 subject classifications}: Primary 62C25, 62M99.
\end{abstract}


\section{Introduction}

To control false discovery rate (FDR, \citealp{Benjamini:1995}) in multiple testing, it is crucial to show the conservativeness of an FDR procedure, i.e., that the FDR of an FDR procedure is no larger than a pre-specified level. To show the conservativeness of their FDR procedures,
\cite{Storey:2004} used the stopping time property (STP) of the rejection threshold that was claimed in their Lemma 4, and \cite{Liang:2012} quoted this lemma as their Lemma 5 and used it to prove their Theorem 7. They assumed that the p-values corresponding to the true null hypotheses (i.e., null p-values) are independent and uniformly distributed. However, none of them provided a formal proof of the STP of the rejection thresholds, and without the STP the optional stopping theorem (see, e.g., \citealp{Karatzas:1991}) can not be applied and the martingale based approach in \cite{Storey:2004} and \cite{Liang:2012} fails.

In this article, we introduce Storey-type FDR procedure in \autoref{Sec:review} as an extension of Storey's procedure in \cite{Storey:2004}, introduce the concept of regular estimator of the proportion of true nulls, and show in \autoref{Sec:STproperty} that the STP of the rejection threshold of a Storey-type FDR procedure is generic
when the estimator of the proportion of true nulls is regular. This provides a formal justification of the use of STP of the rejection thresholds in \cite{Storey:2004} and, if needed, in \cite{Chen:2012GenEst}. However, it also implies that the rejection
threshold of an adaptive FDR procedure may not be a stopping time when the estimator of proportion true nulls is not regular.
Further, we show that a Storey-type FDR estimator at the threshold of its corresponding Storey-type FDR procedure equals to the targeted FDR level. Some consequences of these findings are given in \autoref{sec:cons}, which include that a Storey-type FDR procedure is always conservative when the null p-values are independent and uniformly distributed. We end the article with a short discussion in \autoref{SecDiscussion}.

\section{Storey-type FDR procedures}\label{Sec:review}

Let there be $m$ null hypotheses $H_{i}$ with associated p-values $p_{i}$ for $i=1,...,m$, such that only $m_{0}$ among them are true nulls and the rest false nulls. However, the true status of each $H_i$ is unknown, and so is the proportion of true nulls $\pi _{0}=m_{0}/m$. A one-step multiple testing procedure (MTP) claims that $H_i$ is a false null if and only if $p_i \leq t$ using a rejection threshold $t\in \left[ 0,1\right] $. It gives $R\rbk{t}$ as the total number of null hypotheses claimed to be false and $V\rbk{t}$ as the number of true nulls claimed to be false. The FDR \citep{Benjamini:1995} of such an MTP is defined as
\begin{equation*}
FDR\left( t\right) = \mbE\sbk{V\rbk{t}/\max\cbk{R\rbk{t},1}}\text{,}
\end{equation*}
where $\mbE$ is the expectation.

Let $\tPinull$ with a tuning parameter $ \lambda\in\lbrack0,1)$ be an estimator of
$\pi_{0}$ such that $\tPinull$ ranges in $\left[0,1\right]  $. We define a ``Storey-type FDR estimator'' as
\begin{equation}\label{eqGP}
\widetilde{FDR}_{\lambda }\left( t\right) =\min \left\{ 1,\dfrac{
\tPinull t}{m^{-1}\max \cbk{ R\left(
t\right),1} }\right\} \text{\quad for \quad } t \in \sbk{0,1}.
\end{equation}
For $\alpha \in \sbk{0,1}$, let
\begin{equation}\label{eqThresh}
t_{\alpha}\left(\widetilde{FDR}_{\lambda }\right) =\sup \left\{ t\in \left[ 0,1\right] :
\widetilde{FDR}_{\lambda }\left( t\right) \leq \alpha \right\}
\end{equation}
and the decision rule based on $\widetilde{FDR}_{\lambda }$
\begin{equation}\label{eqRejG}
\text{claim }  H_i \text{ as a false null } \Longleftrightarrow p_i \leq t_{\alpha}\left(\widetilde{FDR}_{\lambda }\right).
\end{equation}
The procedure defined by \eqref{eqGP} and \eqref{eqRejG} is called a ``Storey-type
FDR procedure''.

A Storey-type FDR procedure allows the use of various estimators of $\pi_0$ and is very versatile. For example, (i) it is Storey's procedure when $\tPinull$ in \eqref{eqGP} is the estimator
\begin{equation}\label{eqpi0A}
\hat{\pi}_{0}\left( \lambda \right) = \min\left\{1,
 \left( 1-\lambda \right)^{-1}m^{-1}
\sum\nolimits_{i=1}^{m}\mathbf{1}_{\left\{ p_{i}>\lambda \right\} } \right\}
\end{equation}
in \cite{Storey:2004}; (ii) it is the generalized FDR procedure
in \cite{Chen:2012GenEst}, designed for multiple testing based on discrete p-values, when $\tPinull$ is the estimator of $\pi_0$ proposed there and further studied in \cite{Chen:2015discretefdr}; (iii) it is the BH procedure in \cite{Benjamini:1995} when $\lambda=0$ is set in \eqref{eqpi0A} in Storey's procedure or $\tPinull \equiv 1 $ is set;
(iv) it is a ``dynamic adaptive FDR procedure'' studied in \cite{Liang:2012} when $\lambda$ in \eqref{eqpi0A} is determined from the p-values and can be a random variable.

\section{Stopping time property of the rejection threshold}\label{Sec:STproperty}

For notational simplicity, we will from now on write $\ThreshRej$ as $\RejT$.
Define the backward filtration
\begin{equation}\label{eq:backFil}
  \mathcal{F}_{t}=\sigma \left( \mathbf{1}_{\left\{ p_{i}\leq s\right\}
},t\leq s\leq 1,i=1,...,m\right)
\end{equation}
for each $t \in [0,1)$ 
and the ``stopped backward'' filtration
$\mathcal{G}=\left\{ \mathcal{F}_{t\wedge \lambda }:1\geq t\geq 0\right\}$. Further, introduce the following

\begin{definition}\label{def:regpi0}
An estimator $\tPinull$ of the proportion $\pi_0$ of true nulls with tuning parameter $\lambda
\in\lbrack0,1)$ is said to be regular if it is measurable with respect to (wrt) $\mathcal{F}_{\lambda}$.
\end{definition}

It is crucial to note that, if $\lambda$ is a functional of the p-values such that the information contained in $\mathcal{F}_{\lambda}$ is not sufficient to determine the value of $\tPinull$,
then $\tPinull$ is not measurable wrt $\mathcal{F}_{\lambda}$. This can happen for dynamic Storey procedures considered in
\cite{Liang:2012}, where $\lambda$ is adaptively determined from the data. When this happens, $\RejT$ does not have to be a stopping time wrt to $\mathcal{G}$, the optional stopping theorem can not be applied, and the martingale
arguments to prove the conservativeness of (dynamic) Storey FDR procedure in \cite{Storey:2004} and \cite{Liang:2012} are invalid.
Here is out main result:
\begin{theorem}
\label{ThmStoppingTime}
If $\tPinull$ is regular, then $\RejT$ is a stopping time with respect to $\mathcal{G}$. Further, if $\RejT < 1$, then
\begin{equation}\label{eqExhuast}
\widetilde{FDR}_{\lambda
}\left( \tilde{t}_{\alpha }\left(\lambda\right) \right)
=\alpha.
\end{equation}
\end{theorem}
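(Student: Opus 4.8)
The plan is to handle the two assertions separately, tackling the stopping time property first and then the exhaustion identity \eqref{eqExhuast}.

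For the stopping time property, I must show that $\{\RejT \le t\} \in \mathcal{F}_{t \wedge \lambda}$ for every $t \in [0,1]$ (with the convention appropriate to the reversed-time indexing of $\mathcal{G}$). The first key observation is that, because $\tPinull$ is regular, it is $\mathcal{F}_\lambda$-measurable, and since $\mathcal{F}_\lambda \subseteq \mathcal{F}_{s}$ for $s \le \lambda$, the whole random function $s \mapsto \Gfdr(s)$ restricted to $s \ge t$ is $\mathcal{F}_{t \wedge \lambda}$-measurable: indeed $R(s) = \sum_i \mathbf{1}_{\{p_i \le s\}} = m - \sum_i \mathbf{1}_{\{p_i > s\}}$ and each $\mathbf{1}_{\{p_i > s\}}$ is $\mathcal{F}_s$-measurable, hence $\mathcal{F}_{t}$-measurable for $s \ge t$; combined with $\mathcal{F}_\lambda$-measurability of $\tPinull$ this gives that $\Gfdr(s)$ for $s \ge t$ is measurable wrt $\mathcal{F}_{t \wedge \lambda}$. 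Then I would rewrite the event $\{\RejT \le t\}$ in terms of the function $\Gfdr$ on $[t,1]$: using \eqref{eqThresh}, $\RejT \le t$ holds essentially when $\Gfdr(s) > \alpha$ for all $s > t$ (one must be slightly careful with the supremum being attained or not, and with the behaviour of $\Gfdr$ at $t$ itself, but this is a routine right-continuity/monotonicity bookkeeping argument on the piecewise-constant càdlàg function $R$). Writing this event as a countable intersection over rationals $s > t$ of the sets $\{\Gfdr(s) > \alpha\}$, each of which lies in $\mathcal{F}_{t \wedge \lambda}$, shows $\{\RejT \le t\} \in \mathcal{F}_{t \wedge \lambda}$, which is exactly the stopping time condition for $\mathcal{G}$.

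For the exhaustion identity, assume $\RejT < 1$. The idea is that $\Gfdr$ as a function of $t$ is piecewise constant with upward jumps only at the points $p_{(1)} \le \cdots \le p_{(m)}$ (between consecutive order statistics $R(t)$ is constant so $\Gfdr(t)$ is linear and increasing in $t$, then drops, or more precisely $\Gfdr$ increases on each interval and may jump down at an order statistic since $R$ jumps up). I would argue that the supremum in the definition of $\RejT$ is attained — so $\Gfdr(\RejT) \le \alpha$ — using left-continuity/right-continuity properties of $\Gfdr$ at the candidate point, and then argue that it cannot be strictly less than $\alpha$: if $\Gfdr(\RejT) < \alpha$, then since $\Gfdr$ is continuous and strictly increasing immediately to the right of $\RejT$ until the next order statistic (the numerator $\tPinull t$ grows linearly while the denominator stays constant), and since $\RejT < 1$ guarantees there is room to the right, one could find $t' > \RejT$ with $\Gfdr(t') \le \alpha$, contradicting the definition of the supremum. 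Hence $\Gfdr(\RejT) = \alpha$.

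The main obstacle I anticipate is the careful treatment of the one-sided continuity and the attainment of the supremum in \eqref{eqThresh}: $R(t)$ is right-continuous and nondecreasing, so $\Gfdr$ inherits a mix of continuity from the linear numerator and jump behaviour from $R$ in the denominator, and the $\min\{1,\cdot\}$ truncation adds another kink. Getting the edge cases right — e.g., whether $\{t : \Gfdr(t) \le \alpha\}$ is closed, what happens when $\Gfdr(t) = 1 > \alpha$ on an initial stretch, and ensuring that "$\RejT < 1$" is genuinely the hypothesis that rules out the degenerate case where the supremum is $1$ and never exceeded — is where the real work lies; the measurability argument itself is essentially a formality once one notes that $\mathcal{F}_\lambda$-measurability of $\tPinull$ is precisely what lets the comparison $\Gfdr(s) \lessgtr \alpha$ for $s \ge t$ be read off from $\mathcal{F}_{t \wedge \lambda}$.
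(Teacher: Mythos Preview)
Your overall strategy mirrors the paper's closely: for the stopping-time claim you express $\{\RejT \le t\}$ as $\bigcap_{s>t}\{\Gfdr(s)>\alpha\}$ and invoke the $\mathcal{F}_{t\wedge\lambda}$-measurability of each $\Gfdr(s)$ for $s\ge t$ (which the regularity of $\tPinull$ delivers); for the identity \eqref{eqExhuast} you argue by contradiction, using the strict linear increase of $\Gfdr$ immediately to the right of $\RejT$ on the interval up to the next order statistic. The second part is essentially the paper's own argument.

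There is, however, a concrete gap in your handling of the uncountable intersection. You propose to replace $\bigcap_{s>t}\{\Gfdr(s)>\alpha\}$ by the intersection over \emph{rational} $s>t$, but this reduction is not valid in general: $\Gfdr$ is right-continuous with \emph{downward} jumps at the (random, possibly irrational) distinct order statistics $p_{(j)}$, so its infimum on $(t,1]$ may be attained only at some irrational $p_{(j)}$. Concretely, if $p_{(j)}>t$ is irrational with $\Gfdr(p_{(j)})=\alpha$ exactly and $\Gfdr(s)>\alpha$ for every other $s\in(t,1]$, then $\omega$ belongs to the rational intersection yet $\RejT=p_{(j)}>t$. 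The paper closes this gap by first establishing (its \autoref{lemmaratioprocess} on the scaled inverse rejection process $L(t)=t/\max\{R(t),1\}$) that $L$ is piecewise linear, increasing on each $[p_{(j)},p_{(j+1)})$, with downward jumps only at the $p_{(j)}$; after separating out $\tPinull$ via a rational level $r$, this lets the uncountable intersection collapse to a \emph{finite} one indexed by $s$ together with the order statistics exceeding $s$, all of which are $\mathcal{F}_s$-measurable. Replacing your rational-time reduction by this finite reduction over the actual jump points is the missing ingredient.
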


The proof of \autoref{ThmStoppingTime} is given in \autoref{app}.
Note that $\RejT < 1$ excludes the meaningless case $\alpha = 1$ as the targeted FDR level. Therefore, $\RejT < 1$
in \autoref{ThmStoppingTime} essentially is not an assumption. \autoref{ThmStoppingTime} shows that, regardless
of whether the p-values are independent, whether they are continuously distributed, or the stochastic orders
of their distributions wrt the standard uniform random variable, the rejection threshold of a Storey-type FDR procedure
is a stopping time wrt to the stopped backward filtration, when the estimator of the proportion of true nulls is regular.
It also shows that a Storey-type
FDR estimator equals the pre-specified FDR level at the threshold of its corresponding FDR procedure, when
the estimator of the proportion of true nulls is regular and the corresponding decision rule is statistically meaningfully implemented. Equally importantly, \autoref{ThmStoppingTime} shows that the STP may not hold for the rejection threshold
of a Storey-type procedure when $\tPinull$ is not regular, i.e., when adaptivity of the estimator $\tPinull$ to $\mathcal{F}_{\lambda}$ is not ensured.

We make four remarks on \autoref{ThmStoppingTime} and its proof:
(i) the measurability of $\tPinull$ wrt to $\mathcal{F}_{\lambda}$ is critical to the STP of $\RejT$;
(ii) the fact that \eqref{eqExhuast}, i.e., $\widetilde{FDR}_{\lambda
}\left( \tilde{t}_{\alpha }\left(\lambda\right) \right)
=\alpha$ for $\RejT <1$, holds for general p-values is a new result, whose consequences will be discussed in \autoref{sec:cons}; (iii) the validity of the STP of $\RejT$, regardless of the joint distribution of the p-values, depends crucially on the
intrinsic structure of the backward filtration $\mathcal{G}$;
(iv) the STP of the threshold (wrt a similar backward filtration) defined by equation (7.2) in \cite{Pena:2011} does hold trivially due to the special structure of the filtration there.

\section{Conservativeness of Storey-type FDR procedures}\label{sec:cons}

We give a few implications of \autoref{ThmStoppingTime}. For concise statements, we set assumption A1) as ``the null p-values are independent and uniformly distributed''. Further, for each $t \in [0,1)$ let
$\mathcal{H}_{t}=\sigma \left( \mathbf{1}_{\left\{ p_{i}\leq s\right\}},0\leq s\leq t,i=1,...,m\right)$.
\begin{corollary}\label{cor1}
  Assume A1), the following hold:
  \begin{enumerate}
    \item Lemma 4 of \cite{Storey:2004} is valid when $\hat{\pi}_{0}\rbk{\lambda}$ in \eqref{eqpi0A} is regular.
    \item Storey's procedure in \cite{Storey:2004} is conservative when $\hat{\pi}_{0}\rbk{\lambda}$ in \eqref{eqpi0A} is regular and $\RejT < 1$.
    \item If $\lambda$ is a stopping time wrt to $\cbk{\mc{H}_t: t\in [0,1)}$ and lies in a compact subset $\sbk{\kappa,\tau}$ in $\rbk{0,1}$ for some $\kappa < \tau$ and $\tPinull$ is regular, then a Storey-type FDR procedure is conservative when $\tilde{t}_{\alpha } \leq \kappa$.
  \end{enumerate}
\end{corollary}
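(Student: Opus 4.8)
The plan is to obtain all three items from \autoref{ThmStoppingTime}, which supplies precisely the two facts the original arguments lacked: the stopping time property (STP) of $\RejT$ and the identity \eqref{eqExhuast}. Item~1 is then immediate: since $\hat{\pi}_{0}\rbk{\lambda}$ in \eqref{eqpi0A} takes values in $\sbk{0,1}$ it is a particular $\tPinull$, and the Storey-type estimator \eqref{eqGP} built from it is the $\widehat{FDR}_{\lambda}$ of \cite{Storey:2004}; so once $\hat{\pi}_{0}\rbk{\lambda}$ is regular, \autoref{ThmStoppingTime} applies and delivers both the STP of $\RejT$ wrt $\mathcal{G}$ and $\Gfdr\rbk{\RejT}=\alpha$ when $\RejT<1$, which is exactly what Lemma~4 of \cite{Storey:2004} asserts. (When $\lambda$ is a fixed constant, regularity is automatic, since $\hat{\pi}_{0}\rbk{\lambda}$ is then a measurable function of $\mathbf{1}_{\cbk{p_{i}\leq\lambda}}$, $i=1,\dots,m$, all of which are $\mathcal{F}_{\lambda}$-measurable.)

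For item~2 I would work on $\cbk{0<\RejT<1}$; the case $\RejT=0$ is immediate because $V\rbk{0}=0$ a.s.\ under A1. On that event $\alpha<1$ and $\hat{\pi}_{0}\rbk{\lambda}>0$ (otherwise $\Gfdr\equiv0$ and $\RejT=1$), so by item~1 the $\min$ in \eqref{eqGP} is inactive at $\RejT$, whence $m^{-1}\max\cbk{R\rbk{\RejT},1}=\alpha^{-1}\hat{\pi}_{0}\rbk{\lambda}\RejT$ and
\begin{equation*}
\frac{V\rbk{\RejT}}{\max\cbk{R\rbk{\RejT},1}}=\alpha\,\frac{V\rbk{\RejT}}{m\,\hat{\pi}_{0}\rbk{\lambda}\,\RejT}.
\end{equation*}
Under A1 the relevant backward martingale for the FDR-control argument of \cite{Storey:2004}, built from $t\mapsto V\rbk{t}/\rbk{m\,\hat{\pi}_{0}\rbk{\lambda}\,t}$ on $0<t\leq\lambda$ (the increments of $V$ give the martingale, and $\hat{\pi}_{0}\rbk{\lambda}$ is $\mathcal{F}_{t}$-measurable for $t\leq\lambda$ by regularity), is a martingale indexed by $\mathcal{G}$, and applying the optional stopping theorem at the $\mathcal{G}$-stopping time $\RejT$ --- exactly the step \cite{Storey:2004} could not justify --- yields $FDR\rbk{\RejT}\leq\alpha$.

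For item~3 the additional work is to reduce the dynamic situation to the static analysis of item~2 on the shortened horizon $(0,\kappa]$. Because $\kappa\leq\lambda\leq\tau$ and $\cbk{\mathcal{F}_{t}}$ is backward, $\mathcal{F}_{\lambda}\subseteq\mathcal{F}_{\kappa}$ (through $\mathcal{F}_{s}\subseteq\mathcal{F}_{\kappa}$ for every $s\geq\kappa$), so a regular $\tPinull$ is $\mathcal{F}_{\kappa}$-measurable, hence $\mathcal{F}_{t}$-measurable for all $t\leq\kappa$; moreover $\tau<1$ forces $\tPinull\geq\rbk{m(1-\tau)}^{-1}$ on $\cbk{\tPinull>0}$, an event containing $\cbk{\RejT\leq\kappa}$, which keeps the relevant process integrable there. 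On $\cbk{\RejT\leq\kappa}$ one has $\RejT\wedge\lambda=\RejT\wedge\kappa=\RejT$, so \autoref{ThmStoppingTime} makes $\RejT$ not only a $\mathcal{G}$-stopping time but also a stopping time for the $\kappa$-truncated backward filtration $\cbk{\mathcal{F}_{t\wedge\kappa}}$, and \eqref{eqExhuast} holds since $\RejT\leq\kappa<1$. Then $t\mapsto V\rbk{t}/\rbk{m\,\tPinull\,t}$, $0<t\leq\kappa$, is a backward martingale wrt $\cbk{\mathcal{F}_{t\wedge\kappa}}$, and repeating the optional-stopping computation of item~2 on this horizon gives $FDR\rbk{\RejT}\leq\alpha$ on $\cbk{\RejT\leq\kappa}$.

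I expect the main obstacle to be in item~3: when $\lambda$ is random, $\mathcal{F}_{\lambda}$ is a data-dependent $\sigma$-field, and one must make precise --- using that $\lambda$ is a stopping time of $\cbk{\mathcal{H}_{t}}$ valued in $\sbk{\kappa,\tau}$ --- both that regularity really propagates to $\mathcal{F}_{t}$-measurability for $t\leq\kappa$ and that the STP of \autoref{ThmStoppingTime} transfers from $\mathcal{G}$ to $\cbk{\mathcal{F}_{t\wedge\kappa}}$. This is where the compactness $\sbk{\kappa,\tau}\subset\rbk{0,1}$ is used: $\tau<1$ keeps $\tPinull$ bounded away from $0$, and $\kappa\leq\lambda$ together with $\RejT\leq\kappa$ collapses the stopping at $\lambda$ to stopping at $\kappa$. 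A secondary, standard technicality inherited from \cite{Storey:2004} is that the plain process $V\rbk{t}/t$ is a backward martingale that fails to be uniformly integrable (and the version with $\tPinull$ in the denominator even fails to be integrable on the degenerate event $\cbk{\tPinull=0}$, on which however $\RejT=1$), so the optional-stopping step must be localized to $\cbk{\RejT<1}$ or run with a suitable truncation (e.g.\ the customary $+1$ correction of the estimator), exactly as in \cite{Storey:2004}.
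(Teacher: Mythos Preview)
Your treatment of item~1 matches the paper: it, too, simply says this is a direct consequence of \autoref{ThmStoppingTime}.

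For items~2 and~3 you take a genuinely different route. The paper's proof is two lines long: it quotes Corollary~1 (for item~2) and Theorem~4 (for item~3) of \cite{Liang:2012}, each of which already establishes the one-sided inequality $\mbE\sbk{\Gfdr\rbk{\RejT}} \geq FDR\rbk{\RejT}$ under A1) and the respective hypotheses on $\lambda$. Combining this with the identity \eqref{eqExhuast} from \autoref{ThmStoppingTime} (so that $\mbE\sbk{\Gfdr\rbk{\RejT}}=\alpha$) immediately yields $FDR\rbk{\RejT}\leq\alpha$. No martingale is reconstructed, no optional stopping is reapplied, and no integrability argument is needed inside this corollary --- those are already packaged inside the cited results. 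Your approach instead rebuilds the backward-martingale argument from scratch, which is more self-contained and makes explicit where the STP and \eqref{eqExhuast} enter; the price is that you must re-handle the technicalities (integrability, the degenerate event $\cbk{\tPinull=0}$, the transfer from $\mathcal{G}$ to $\cbk{\mathcal{F}_{t\wedge\kappa}}$) that you yourself flag as obstacles. One concrete slip in that direction: your lower bound $\tPinull\geq\rbk{m(1-\tau)}^{-1}$ on $\cbk{\tPinull>0}$ presupposes the specific Storey estimator \eqref{eqpi0A} (and even there the correct lower bound on $\cbk{\hat{\pi}_{0}\rbk{\lambda}>0}$ is $\rbk{m(1-\kappa)}^{-1}$, since $\lambda\leq\tau$ gives $(1-\lambda)^{-1}\leq(1-\tau)^{-1}$), whereas item~3 is stated for a general regular $\tPinull$, for which no such bound need hold. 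The paper's citation-based argument sidesteps this entirely.
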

\begin{proof}
  The first claim is a direct consequence of \autoref{ThmStoppingTime}. We show the second claim. By Corollary 1 in \cite{Liang:2012}, $\mbE\sbk{\widetilde{FDR}_{\lambda }\left( \tilde{t}_{\alpha }\right)} \geq
  FDR\rbk{\tilde{t}_{\alpha }}$. However, \autoref{ThmStoppingTime} implies $\mbE\sbk{\widetilde{FDR}_{\lambda }\left( \tilde{t}_{\alpha }\right)}= \alpha$. So, $FDR\rbk{\tilde{t}_{\alpha }} \leq \alpha$. Finally, we show the third.
   By Theorem 4 in \cite{Liang:2012}, $\mbE\sbk{\widetilde{FDR}_{\lambda }\left( \tilde{t}_{\alpha }\right)} \geq
  FDR\rbk{\tilde{t}_{\alpha }}$. Therefore, from \autoref{ThmStoppingTime} we see $FDR\rbk{\tilde{t}_{\alpha }} \leq \alpha$. This completes the proof.
\end{proof}

\autoref{cor1} illustrates the connection between the regularity of the estimator of the proportion of true nulls, the STP of the rejection threshold, and the conservativeness of a Storey-type FDR procedure. The third conclusion in \autoref{cor1} allows to choose $\lambda$ in $\tPinull$ from the data in a Storey-type FDR procedure without potentially sacrificing its conservativeness. Further, it covers the dynamic Storey's FDR procedure in \cite{Liang:2012} where $\hat{\pi}_{0}\rbk{\lambda}$ in \eqref{eqpi0A}
is used and $\lambda$ is determined from the data.

Compared to the proofs of the conservativeness of (dynamic) Storey's procedure in \cite{Storey:2004} and \cite{Liang:2012}, the proof of \autoref{cor1} we provide based on \autoref{ThmStoppingTime} is much shorter and the second conditioning step there is no longer needed. This illustrates the usefulness of our finding \eqref{eqExhuast} for general p-values in the martingale arguments to prove the conservativeness of a Storey-type FDR procedure.

\section{Discussion}\label{SecDiscussion}

We have shown the STP of the rejection threshold of a Storey-type FDR procedure for general p-values, that a Storey-type FDR
estimator equals the targeted FDR level at its rejection threshold, and that a Storey-type FDR procedure is usually conservative, when the estimator of the proportion of true nulls is regular.
This implies that the STP may not hold for the rejection threshold of an adaptive FDR procedure when an estimator of the proportion of true nulls is not adaptive to the backward filtration. Therefore, caution should be taken before applying martingale arguments to prove the conservativeness of an adaptive FDR procedure.
In view of the increasing use of stochastic processes (such as decision processes) and suprema related to these processes (such as rejection thresholds) in multiple testing, to better understand
the behavior of these suprema remains an important task.

\section*{Acknowledgements}

This research was funded by a National Science Foundation Plant Genome
Research Program grant (No. IOS-1025976) to RWD. Its major part
was completed when X. Chen was a PhD student at the Department of Statistics, Purdue University.

\numberwithin{equation}{section}
\appendix
\section{Proofs}\label{app}

In order to prove the STP of the rejection threshold, we first need to understand the
the \textit{scaled
inverse rejection process} $L\left( t\right) =t\left( \max\cbk{R\left( t\right),
1}\right)^{-1}$ with $t\in \left[ 0,1\right] $, with which the regularity of $\tPinull$,
the property of the stopped backward filtration $\mc{G}$ and
a contrapositive argument will yield \autoref{ThmStoppingTime}.
In what follows, no assumption will be made about the independence
between the p-values, the continuity of the p-value distributions, or their stochastic orders wrt
the standard uniform distribution.

\subsection{Downward jumps of the scaled inverse rejection process}\label{sec:inverseRej}

Order the p-values into $p_{\left( 1\right)
}<p_{\left( 2\right) }<...<p_{\left( n\right) }$ distinctly, where the
multiplicity of $p_{\left( i\right) }$ is $n_{i}$ for $i=1,...,n$. Let $%
p_{\left( n+1\right) }=\max \left\{ p_{\left( n\right) },1\right\} $ and $%
p_{\left( 0\right) }=0.$ Define $T_{j}=\sum\limits_{l=1}^{j}n_{l}$ for $j=1,...,n$.

\begin{lem}
\label{lemmaratioprocess}$\bigskip $The process $\left\{ L\left( t\right)
,t\in \lbrack 0,1]\right\} $ is such that%
\begin{equation}
L\left( t\right) =\left\{
\begin{array}{lcl}
t & \text{if} & t\in \lbrack 0,p_{\left( 1\right) })\text{,} \\
tT_{j}^{-1} & \text{if} & t\in \lbrack p_{\left( j\right) },p_{\left(
j+1\right) })\text{ for }j=1,..,n-1\text{,} \\
tm^{-1} & \text{if} & t\in \lbrack p_{\left( n\right) },p_{\left ( n+1\right)}]\text{.}%
\end{array}%
\right.  \label{3}
\end{equation}%
Moreover, it can only be discontinuous at $p_{\left( i\right) }$, $1\leq
i\leq n$, where it can only have a downward jump with size%
\begin{equation*}
L\left( p_{\left( i\right) }-\right) -L\left( p_{\left( i\right) }\right) =%
\dfrac{p_{\left( i\right) }n_{i}}{R\left( p_{\left( i\right) }\right)
\left[ R\left( p_{\left( i\right) }\right) -n_{i}\right] }>0\text{.}
\end{equation*}
\end{lem}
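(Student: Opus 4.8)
The plan is to prove \eqref{3} by evaluating $R$ explicitly on each piece of the partition of $[0,1]$ that the ordered p-values induce, and then to read off the jump behaviour directly from the resulting piecewise-linear formula; the statement is pathwise, so no assumption on the joint law of the p-values enters. Concretely, I would first compute $R(t)=\#\{i:p_i\le t\}$: for $t\in[0,p_{(1)})$ no p-value lies in $[0,t]$, so $R(t)=0$ and $\max\{R(t),1\}=1$, giving $L(t)=t$; for $t\in[p_{(j)},p_{(j+1)})$ with $1\le j\le n-1$, exactly the distinct values $p_{(1)},\dots,p_{(j)}$ lie in $[0,t]$, so counting multiplicities $R(t)=\sum_{l=1}^{j}n_l=T_j\ge1$ and hence $L(t)=tT_j^{-1}$; and for $t\in[p_{(n)},p_{(n+1)}]$ every p-value satisfies $p_i\le t$, so $R(t)=T_n=m$ and $L(t)=tm^{-1}$. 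This is exactly \eqref{3}.

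For the jump statement, I would note that on the interior of each interval above $L$ is linear in $t$ (with slope $1$, $T_j^{-1}$, or $m^{-1}$), hence continuous, and since each interval is closed on the left, $L$ is right-continuous on $[0,1]$; so the only candidate discontinuities are the breakpoints $p_{(1)},\dots,p_{(n)}$. At $p_{(i)}$ the left limit is $L(p_{(i)}-)=p_{(i)}(\max\{T_{i-1},1\})^{-1}$, with $T_0:=0$, whereas $L(p_{(i)})=p_{(i)}T_i^{-1}$; since $T_i=T_{i-1}+n_i$ with $n_i\ge1$ and $T_i\ge1$, we get $L(p_{(i)}-)\ge L(p_{(i)})$, so a jump at $p_{(i)}$, if present, is downward. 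For $i\ge2$ we have $\max\{T_{i-1},1\}=T_{i-1}$, and because $R(p_{(i)})=T_i$ and $R(p_{(i)})-n_i=T_{i-1}$,
\[
L(p_{(i)}-)-L(p_{(i)})=\frac{p_{(i)}}{T_{i-1}}-\frac{p_{(i)}}{T_i}=\frac{p_{(i)}n_i}{T_{i-1}T_i}=\frac{p_{(i)}n_i}{R(p_{(i)})\,[R(p_{(i)})-n_i]}>0,
\]
which is the asserted size.

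The only place that needs separate attention is the first breakpoint $i=1$: there $R(p_{(1)}-)=0$, so the truncation $\max\{\cdot,1\}$ in the definition of $L$ is active and one cannot simply write $L(p_{(1)}-)=p_{(1)}/T_0$; instead $L(p_{(1)}-)=p_{(1)}$ and $L(p_{(1)})=p_{(1)}/n_1$, so the jump equals $p_{(1)}(n_1-1)/n_1$, which is $0$ when $p_{(1)}$ is a simple value and strictly positive when it is repeated --- consistent with the displayed formula provided the factor $R(p_{(i)})-n_i$ is read as $\max\{R(p_{(i)})-n_i,1\}$ at $i=1$. Beyond this bookkeeping I do not anticipate any real obstacle: the lemma is an elementary deterministic description of the piecewise-linear trajectory of $L$, and the only thing to be careful about is the effect of the $\max\{\cdot,1\}$ truncation near the left endpoint.
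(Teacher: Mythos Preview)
Your argument is correct and follows the same strategy as the paper: compute $R(t)$ explicitly on each interval of the partition, plug into $L$, then read off the jumps. The only stylistic difference is that the paper introduces an auxiliary function $\varphi(t,\eta)=(t+\eta)/R(t+\eta)-t/R(t)$ and computes the jump as $\lim_{\eta\uparrow 0}\varphi(p_{(j)},\eta)$, whereas you obtain the same quantity directly from the piecewise formula for $L$; both routes are equally elementary.

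You are in fact more careful than the paper at one point: the displayed jump-size formula has denominator $R(p_{(i)})[R(p_{(i)})-n_i]$, which vanishes at $i=1$ since $R(p_{(1)})-n_1=0$. The paper's $\varphi$ computation silently assumes $R(p_{(j)}-)>0$ and so does not address this boundary case; your separate treatment of $i=1$ (giving jump $p_{(1)}(n_1-1)/n_1$, which is zero when $n_1=1$) is the right way to handle it.
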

\begin{proof}
  Clearly%
\begin{equation*}
R\left( t\right) =\left\{
\begin{array}{ccl}
0 & \text{if} & 0\leq t<p_{\left( 1\right) }\text{,} \\
T_{j} & \text{if} & p_{\left( j\right) }\leq t<p_{\left( j+1\right)
},j=1,..,n-1\text{,} \\
m & \text{if} & p_{\left( n\right) }\leq t\leq p_{\left( n+1\right) }\text{,}%
\end{array}%
\right.
\end{equation*}%
and (\ref{3}) holds. Therefore, the points of discontinuities of $L\left(
\cdot \right) $ are the original distinct p-values. This justifies the
first part of the assertion.

Now we show that $L\left( \cdot \right) $ can only have downward jumps at
points of discontinuity. Define%
\begin{equation*}
\varphi \left( t,\eta \right) =\dfrac{t+\eta }{R\left( t+\eta \right) }-%
\dfrac{t}{R\left( t\right) }=\dfrac{\eta R\left( t\right) +t\left[ R\left(
t\right) -R\left( t+\eta \right) \right] }{R\left( t+\eta \right) R\left(
t\right) }\text{.}
\end{equation*}%
From the fact
$
R\left( p_{\left( j\right) }\right) -R\left( p_{\left( j\right) }-\right)
=n_{j}>0
$
but $R\left( p_{\left( j\right) }+\right) -R\left( p_{\left( j\right)
}\right) =0$ for each $1\leq j\leq n,$ it follows that $\varphi \left(
p_{\left( j\right) },0+\right) =\lim_{\eta \downarrow 0}\varphi \left(
t,\eta \right) =0$ but%
\begin{equation*}
\varphi \left( p_{\left( j\right) },0-\right) =\lim_{\eta \uparrow 0}\varphi
\left( p_{\left( j\right) },\eta \right) =\dfrac{p_{\left( j\right) }n_{j}}{%
R\left( p_{\left( j\right) }\right) \left[ R\left( p_{\left( j\right)
}\right) -n_{j}\right] }>0\text{.}
\end{equation*}%
Thus $L\left( p_{\left( j\right) }-\right) -L\left( p_{\left( j\right)
}\right) =\varphi \left( p_{\left( j\right) },0-\right) >0$ and the proof is
completed.
\end{proof}
\autoref{lemmaratioprocess} shows that the process $L\left(t\right)$ is piecewise linear and can only have downward jumps as $t$
increases.
The conclusion of \autoref{lemmaratioprocess} is right the contrary to the claim in the proof
of Theorem 2 in \cite{Storey:2004} that \textquotedblleft the process $%
mt/R\left( t\right) $ has only upward jumps and has a final value of $1$%
\textquotedblright , since it says \textquotedblleft the process $mt/R\left(
t\right) $ has only downward jumps\textquotedblright . We construct a
counterexample to their claim as follows.

For a small increase $c$ in $t$
which results an increase $a_{c}$ in $R\left( t\right) $, we see that%
\begin{equation*}
L\left(t+c\right) - L\left(t\right)= \dfrac{t+c}{R\left( t\right) +a_{c}}-\dfrac{t}{R\left( t\right) }=\dfrac{%
cR\left( t\right) -ta_{c}}{\left[ R\left( t\right) +a_{c}\right] R\left(
t\right) }<0
\end{equation*}%
if and only if $\dfrac{c}{a_{c}}<\dfrac{t}{R\left( t\right) }$. Construct $m$
p-values with $n\geq 4$ such that there exists some $1\leq j_{0}<n-2$ with
$n_{j_{0}+1}>T_{j_{0}}$ but $p_{\left( j_{0}+1\right) }<1$. Choose $c_{1}$
and $c_{2}$ such that $0<c_{1}<\left( p_{\left( j_{0}+1\right) }-p_{\left(
j_{0}\right) }\right) /2$ and $0<c_{2}<p_{\left( j_{0}+1\right) }-2c_{1}$.
Let $t_{0}=p_{\left( j_{0}+1\right) }-c_{1}$ and $c=c_{1}+c_{2}$. Then $%
p_{\left( j_{0}\right) }<t_{0}<p_{\left( j_{0}+1\right) }$, $R\left(
t_{0}\right) =T_{j_{0}}$ and $R\left( t_{0}+c\right) =T_{j_{0}}+n_{j_{0}+1}$%
. Further, $0<c<t_{0}$ and $a_{c}=n_{j_{0}+1}$. So $\dfrac{c}{n_{j_{0}+1}}<%
\dfrac{t_{0}}{T_{j_{0}}}$ and $L\left( t_{0}+c\right) -L\left( t_{0}\right)
<0$. Letting $c\rightarrow 0$ gives $p_{\left( j_{0}+1\right) }$ as a point
of downward jump for $L\left( t\right) $.

\subsection{Proof of \autoref{ThmStoppingTime}}

When $\alpha = 0$, $\RejT=0$ if $\tPinull >0$
but $\RejT=1$ if $\tPinull =0$. In this case, $\RejT$ is already a stopping time
and $\Gfdr\rbk{\RejT} = \alpha = 0$.
On the other hand, $\RejT=1$ when $\alpha=1$. In this case, $\RejT$
is a stopping time and $\Gfdr\rbk{\RejT} = \tPinull$.
Note that $\tPinull=0$ can not happen when $0< \RejT <1$.

First, we show the claim on the stopping time property. By the previous discussion,
we only need to consider the case where $0<\RejT<1$ and $\tPinull >0$.
Define%
\begin{equation*}
\mathbf{X}_{t}^{\left( m\right) }\left( \omega \right) =\left( \mathbf{1}_{\left\{
p_{1}\leq t\right\} }\left( \omega \right) ,...,\mathbf{1}_{\left\{ p_{m}\leq
t\right\} }\left( \omega \right) \right) \text{, }\omega \in \Omega \text{,}
\end{equation*}%
where $\left(\Omega,\mathcal{A},\mathbb{P}\right)$ is the probability space
with $\Omega$ the sample space, $\mathcal{A}$ the sigma-algebra, and
$\mathbb{P}$ the probability measure.
Then $\mathcal{F}_{t}=\sigma \left( \mathbf{X}_{s}^{\left( m\right) }\left(
\omega \right) ,1\geq s\geq t\right) $, $t\in \left[ 0,1\right] $ and $%
\left\{ \mathcal{F}_{t}:0\leq t\leq 1\right\} $ is a non-increasing sequence of
sub-sigma-algebras of $\mathcal{A}$. Write $\widetilde{FDR}_{\lambda }\left(
t\right) =\widetilde{FDR}_{\lambda }\left( t,\omega \right) $.

By the definition of $\RejT$,%
\begin{equation}\label{eq:TheSet}
\left\{ \omega \in \Omega :\RejT\leq s\right\}
=\bigcap\limits_{\left\{ t:s<t\leq 1\right\} }A_{t}=\tilde{A}_{s}\text{,}
\end{equation}%
where $A_{t}=\left\{ \omega \in \Omega :\widetilde{FDR}_{\lambda }\left(
t,\omega \right) >\alpha \right\} $, $1\geq t>s$. Thus, we only need to show $\tilde{A}%
_{s}\in \mathcal{F}_{s\wedge \lambda }$.
Since either $\mathcal{F}_{s\wedge \lambda }=\mathcal{F}_{s}\supseteq
\mathcal{F}_{\lambda }$ when $s\leq \lambda $ or $\mathcal{F}_{s\wedge
\lambda }=\mathcal{F}_{\lambda }\supseteq \mathcal{F}_{s}$ when $s\geq
\lambda $, the stopping time property holds once we prove $\tilde{A}_{s}\in
\mathcal{F}_{s}$. Let $\mathbb{Q}$ be the set of all rational numbers.
Since $0< \alpha <1$ and $\tPinull >0$, the following decompositions are valid:
$
A_{t}=\bigcup\nolimits_{r\in \mathbb{Q}}\left( A_{t,r}\cap B_{r}\right)
$
and%
\begin{equation*}
\tilde{A}_{s}=\bigcap\limits_{\left\{ t:s<t\leq 1\right\}
}\bigcup\limits_{r\in \mathbb{Q}}\left( A_{t,r}\cap B_{r}\right)
=\bigcup\limits_{r\in \mathbb{Q}}\bigcap\limits_{\left\{ t:s<t\leq
1\right\} }\left( A_{t,r}\cap B_{r}\right) \text{,}
\end{equation*}%
where $A_{t,r}=\left\{ \omega \in \Omega :\dfrac{t}{m^{-1}\max\cbk{ R\left(
t\right),1} }\geq r\right\} $ and $B_{r}=\left\{ \omega \in
\Omega :\dfrac{\alpha }{\tPinull }<r\right\} $. Thus, it suffices to show%
\begin{equation}\label{eq:SetEqualA}
\bigcap\limits_{\left\{ t:s<t\leq 1\right\} }\left( A_{t,r}\cap
B_{r}\right) =\bigcap\limits_{\left\{ t:s<t\leq 1\right\} }A_{t,r}\cap
B_{r}\in \mathcal{F}_{s}\text{,}
\end{equation}

We now move to show
\begin{equation}\label{eq:SetEqualB}
A_{s,r}^{\ast }=\bigcap\limits_{\left\{ t:s<t\leq 1\right\} }A_{t,r}\in
\mathcal{F}_{s}\text{.}
\end{equation}
Define $I_{i}=[p_{\left( i\right) },p_{\left( i+1\right) })$, $i=1,...,n-1$.
We will add $I_{0}=[p_{\left( 0\right) },p_{\left( 1\right) })$ and $%
I_{n}=[p_{\left( n\right) },p_{\left( n+1\right) }]$ when $p_{\left(
n\right) }<1$. When $p_{\left( n\right) }=1$, we take $I_{n-1}$ to be $\left[
p_{\left( n-1\right) },p_{\left( n\right) }\right] $. Obviously there must
be a unique $j^{\ast }$ with $0\leq j^{\ast }\leq n$ such that $s\in
I_{j^{\ast }}$. Given $R\left( 1\right) =m$ and $\tPinull
 \in \left[ 0,1\right] $, the properties
of $L\left( \cdot \right) $ in \autoref{lemmaratioprocess} imply%
\begin{equation*}
A_{s,r}^{\ast }=A_{s,r}\bigcap \left( \bigcap\nolimits_{j=j^{\ast
}+1}^{n+1}A_{p_{\left( j\right) },r}\right) \text{.}
\end{equation*}%
Consequently, $A_{s,r}^{\ast }\in \mathcal{F}_{s}$, i.e., \eqref{eq:SetEqualB} holds.

Recall $\tilde{A}_{s}$ defined in \eqref{eq:TheSet}. If $B_{r}\in \mathcal{F}_{\lambda }$, then \eqref{eq:SetEqualB} implies \eqref{eq:SetEqualA}, which implies $\tilde{A}_{s}\in \mathcal{F}_{s}$, i.e., $\RejT$ is a STP wrt to $\mc{G}$ if $\tPinull$ is measurable wrt to $\mc{F}_{\lambda}$. However, this holds since $\tPinull$ is regular.

Finally, we show $\widetilde{FDR}_{\lambda }\left( \tilde{t}_{\alpha }\right) =\alpha
$, where we have written $\RejT$ as $\tilde{t}_{\alpha }$.
Clearly, $\widetilde{FDR}_{\lambda }\left( \tilde{t}_{\alpha }\right) \leq \alpha$
by the definition of $\tilde{t}_{\alpha }$. Our arguments next proceed by contrapositive
reasoning, i.e., that
$\widetilde{FDR}_{\lambda }\left( \tilde{t}_{\alpha }\right) < \alpha$ gives a contradiction.
Obviously, there must be a unique $0\leq j^{\prime }\leq n$ such that $\tilde{t}%
_{\alpha }\in I_{j^{\prime }}$.
Since $\tilde{t}_{\alpha }<1$, we have $\alpha <1$, $\tPinull >0$, $p_{\left(j^{\prime}+1\right)}>0$ and
$I^{\ast }=\left[\tilde{t}_{\alpha },%
p_{\left(j^{\prime}+1\right)}\right)\subseteq $ $I_{j^{\prime }}$.
Let $\rho _{m}^{\ast }\left( t\right) =m^{-1}L\left( t\right) $. Then
$\rho _{m}^{\ast }\left( t\right)$ is
continuous and strictly increasing on $I_{j^{\prime }}$ by \autoref{lemmaratioprocess}.
If
$\widetilde{FDR}_{\lambda }\left( \tilde{t}_{\alpha }\right) < \alpha$ and $\alpha <1$,
then there must be some $d^{\prime}>0$ such that
$\tilde{I}^{\ast}=\left[\tilde{t}_{\alpha }, \tilde{t}_{\alpha }+d^{\prime}\right] \subseteq I^{\ast }$
and that
\begin{equation}\label{eqReduction}
\widetilde{FDR}_{\lambda }\left( t\right) = \tPinull \rho _{m}^{\ast }\left( t\right)
\text{\quad for all \quad} t \in \tilde{I}^{\ast}.
\end{equation}
However, \eqref{eqReduction} implies that there exists
$\hat{t}_{\alpha }\in \tilde{I}^{\ast }$ such that $\hat{t}_{\alpha }>\tilde{t}%
_{\alpha }$ but $$\widetilde{FDR}_{\lambda }\left(\hat{t}_{\alpha }\right)
= \tPinull \rho _{m}^{\ast }\left( \hat{t}_{\alpha }\right) <\alpha.$$
This contradicts the definition of $\tilde{t}_{\alpha }.$
Hence $\widetilde{FDR}_{\lambda }\left( \tilde{t}_{\alpha }\right) =\alpha
$ must hold, which completes the whole proof.


\bibliographystyle{chicago}

\end{document}